\newtheorem{thm}{Theorem}[section]
\newtheorem{lem}[thm]{Lemma}
\theoremstyle{definition}
\theoremstyle{remark}
\newtheorem{rem}[thm]{Remark}
\numberwithin{equation}{section}
\newcommand{\Real}{\mathbb R}
\newcommand{\To}{\longrightarrow}
\title{A Bernstein Type Theorem For Self-similar Shrinkers}
\author{Lu Wang}%
\address{Mathematics Department, 
Massachusetts Institute of Technology, 
77 Massachusetts Avenue, Cambridge, MA 02139.}
\email{luwang@math.mit.edu}
\begin{document}
\begin{abstract}
In this note, we prove that smooth self-shrinkers in $\Real^{n+1}$, that are entire graphs, are hyperplanes. Previously Ecker and Huisken showed that smooth self-shrinkers, that are entire graphs and have at most polynomial growth,  are hyperplanes. The point of this note is that no growth assumption at infinity is needed.
\end{abstract}
\maketitle

\section{Introduction}
In this note, we show that smooth self-shrinkers in $\Real^{n+1}$, that are entire graphs, are hyperplanes. A smooth one-parameter family of hypersurfaces, $F:(0,T)\times M^n\To\Real^{n+1}$, is moving by mean curvature, if 
\begin{equation}
\label{Eqn7}
\frac{dF}{dt}=-H\textbf{n},
\end{equation}
where $\textbf{n}$ is the unit normal of $M_t=F(t,M)$ and $H=\text{div}(\textbf{n})$.
Self-shrinkers represent a special class of solutions of equation $(\ref{Eqn7})$, which do not change shape under mean curvature flow. Namely, for self-shrinkers, a later time slice is a scaled down copy of an earlier slice.
Self-shrinkers play a crucial role in studying the singularities of mean curvature flow. For instance, in \cite{H1} and \cite{H2}, Huisken discovered a monotonicity formula for mean curvature flow and proved that, around the singular point $(x_0,t_0)$, if the blow-up rate of curvature is bounded above by a multiple of $1/\sqrt{t_0-t}$, then the rescaled hypersurfaces moving by mean curvature converge to a self-shrinker smoothly. In the recent work of Colding and Minicozzi, they showed that the only smooth embedded entropy stable self-shrinkers with polynomial volume growth in $\Real^{n+1}$ are hyperplanes, $n$-spheres and cylinders $\Real^k\times S^{n-k}$, $0<k<n$, and classified the generic singularities of mean curvature flow; see \cite{CM1}. On the other hand, self-shrinkers are minimal hypersurfaces in $\Real^{n+1}$ under the conformal metric $g_{ij}=\exp(-|\overrightarrow{x}|^2/2n)
\delta_{ij}$; see \cite{An}, \cite{CM1} and \cite{CM2}. In minimal hypersurface theory, the Bernstein Theorem is one of the most fundamental theorems, and has many important applications, such as uniqueness and regularity of minimal hypersurfaces. Thus it is natural to ask whether there is a Bernstein type theorem for self-shrinkers. Ecker and Huisken studied mean evolution of entire graphs in a series of papers beginning with \cite{EH1} in 1989. In particular, they proved in the appendix of \cite{EH1} that smooth self-shrinkers in $\Real^{n+1}$, that are entire graphs and have at most polynomial growth, are hyperplanes. Later, in \cite{EH2}, they derived various interior estimates for mean curvature flow and proved the existence of smooth mean envolution of graphs with only locally Lipschitz initial data. Also, in \cite{CM3}, Colding and Minicozzi proved sharp gradient and area estimates for graphs moving by mean curvature. In this note, we study self-shrinkers using the $L$-stability operator, which was introduced by Colding and Minicozzi in \cite{CM1} and \cite{CM2}. We give an elementary proof of a Berntein type theorem for self-shrinkers in $\Real^{n+1}$ without assumption of the growth at infinity and without using gradient or curvature estimates for mean curvature flow.\footnote[1]{It follows from Lemma \ref{Lem3} in the next section and the interior gradient estimate in \cite{EH2} and \cite{CM3} that the gradient of entire graphical self-shrinkers is bounded. Thus the proposition in the appendix of \cite{EH1} applies. However, the point of this note is to give an elementary proof of a Bernstein type theorem for self-shrinkers, which parallels the Bernstein theorem for minimal hypersurfaces.} In contrast to the Bernstein theorem for minimal hypersurfaces, which is true only for $n\leq 6$ (see \cite{S}, \cite{BDG} and \cite{BDM}), our Bernstein type theorem for self-shrinkers holds for any $n$. The proof in section $2$ is very simple in any dimension, while the proof of the Bernstein theorem for minimal hypersurfaces is complicated even in relative low dimension; see \cite{SSY}. The reason behind this is that smooth self-shrinkers in $\Real^{n+1}$, that are entire graphs, have polynomial volume growth as minimal hypersurfaces (although the orders of volume growth are different), and a weighted stability inequality $(\ref{Eqn8})$ with weight $\exp(-|\overrightarrow{x}|^2/4)$, which makes the right hand side of $(\ref{Eqn8})$ tending to zero in any dimension by choosing appropriate cut-off functions.

Throughout we use the subscripts $x_1,\dots,x_n$ and $t$ to denote derivatives of functions with respect to $x_1,\dots,x_n$ and $t$; $\overrightarrow{x}$ is the position vector in $\Real^{n+1}$; $\omega_n$ is half of the volume of the unit $n$-sphere in $\Real^{n+1}$; we define
\begin{eqnarray*}
v_i&=&(0,\dots,0,1,0,\dots,0),\\
&&\quad\quad\quad\quad ith
\end{eqnarray*}
where $i$ is an integer and $i\in\{1,\dots,n+1\}$.
The main result of this note is that:
\begin{thm}
\label{Thm1}
Suppose that the smooth function $u(x_1,\dots,x_n):\Real^n\To\Real$ satisfies the self-shrinker equation:
\begin{equation}
\label{Eqn1}
\emph{div}\left(\frac{Du}{\sqrt{1+|Du|^2}}
\right)=\frac{x_1u_{x_1}+\dots+x_nu_{x_n}-u}{2\sqrt{1+|Du|^2}},
\end{equation}
where $Du=(u_{x_1},\dots,u_{x_n})$ and $|Du|^2=u_{x_1}^2+\dots+u_{x_n}^2$. Then $u=a_1x_1+\dots+a_nx_n$ for some constants $a_1,\dots,a_n\in\Real$. 
\end{thm}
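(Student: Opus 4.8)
The plan is to realize the graph $\Sigma = \{(x,u(x)) : x\in\Real^n\}$ as a minimal hypersurface for the Gaussian-weighted metric and exploit the $L$-stability of such graphs. First I would record the geometric meaning of \eqref{Eqn1}: writing $\textbf{n}$ for the downward (or upward) unit normal of $\Sigma$ and $H = \Div(\textbf{n})$ for its mean curvature, equation \eqref{Eqn1} says precisely that $H = \langle \overrightarrow{x},\textbf{n}\rangle/2$, i.e. $\Sigma$ is a self-shrinker, and equivalently a critical point of the weighted area functional $\int e^{-|\overrightarrow{x}|^2/4}\,d\mu$. The linearization of this functional at $\Sigma$ is governed by the operator $L = \Delta_\Sigma - \tfrac12\langle\overrightarrow{x},\nabla_\Sigma(\cdot)\rangle + |A|^2 + \tfrac12$, and the key input from Colding--Minicozzi that I would quote is that a \emph{graphical} self-shrinker is $L$-stable, because the function $\langle \textbf{n}, v_{n+1}\rangle$ (the last coordinate of the normal) is positive and lies in the kernel of $L$ — positivity of a Jacobi field forces stability. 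Concretely this gives the weighted stability inequality
\begin{equation}
\label{Eqn8ref}
\int_\Sigma \left(|A|^2 + \tfrac12\right)\varphi^2\, e^{-|\overrightarrow{x}|^2/4}\,d\mu \;\le\; \int_\Sigma |\nabla_\Sigma \varphi|^2\, e^{-|\overrightarrow{x}|^2/4}\,d\mu
\end{equation}
for every compactly supported Lipschitz $\varphi$ on $\Sigma$.

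Next I would establish the volume bound that makes the right-hand side of \eqref{Eqn8ref} controllable: an entire graphical self-shrinker has \emph{polynomial} volume growth with respect to the Euclidean metric, so that $\int_\Sigma e^{-|\overrightarrow{x}|^2/4}\,d\mu < \infty$ and more importantly $\int_{\Sigma\cap B_{2R}\setminus B_R} e^{-|\overrightarrow{x}|^2/4}\,d\mu \to 0$ as $R\to\infty$. I expect this polynomial-growth fact to follow from the preceding lemmas in the paper (the footnote references Lemma \ref{Lem3}), combined with the defining equation; since I may assume earlier results, I would cite it. Then I would choose logarithmic cut-off functions $\varphi_R$ equal to $1$ on $\Sigma\cap B_R$, supported in $\Sigma\cap B_{R^2}$, with $|\nabla_\Sigma\varphi_R| \le C/(r\log R)$ where $r = |\overrightarrow{x}|$ (the standard Cheng--Yau / logarithmic cut-off), so that
\[
\int_\Sigma |\nabla_\Sigma\varphi_R|^2\, e^{-|\overrightarrow{x}|^2/4}\,d\mu \;\le\; \frac{C}{(\log R)^2}\int_{\Sigma\cap B_{R^2}} \frac{e^{-|\overrightarrow{x}|^2/4}}{r^2}\,d\mu \;\longrightarrow\; 0,
\]
using finiteness of the weighted volume.

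Letting $R\to\infty$ in \eqref{Eqn8ref} with $\varphi = \varphi_R$ forces $\int_\Sigma\left(|A|^2 + \tfrac12\right) e^{-|\overrightarrow{x}|^2/4}\,d\mu \le 0$. Since the weight is strictly positive, this is impossible unless $\Sigma$ has zero (weighted, hence Euclidean) volume — which never happens — so in fact the only way the inequality survives is the degenerate one, giving $|A|^2 \equiv 0$; concretely $\tfrac12\int_\Sigma e^{-|\overrightarrow{x}|^2/4}\,d\mu \le 0$ is already a contradiction, so $\Sigma$ must be totally geodesic, i.e.\ a hyperplane, and hence $u$ is affine: $u = a_1x_1+\dots+a_nx_n$. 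The main obstacle, and the place where care is needed, is the polynomial volume growth / finiteness of weighted volume for an \emph{a priori} arbitrary entire graph: without it the cut-off argument collapses, and unlike the minimal surface case one genuinely needs the self-shrinker equation (not just graphicality) to get it. A secondary technical point is justifying that $\langle\textbf{n},v_{n+1}\rangle$ is a genuine (globally defined, nowhere-zero) solution of $Lw=0$ on the entire graph, which uses that the graph is complete and smooth over all of $\Real^n$.
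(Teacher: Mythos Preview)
Your overall strategy matches the paper's --- a weighted stability inequality coming from the positive function $f = \langle \textbf{n}, v_{n+1}\rangle$, polynomial volume growth (Lemmas~\ref{Lem2} and~\ref{Lem3}), and a cut-off argument --- but the stability inequality you write down is wrong, and this breaks the conclusion. The function $f$ is \emph{not} in the kernel of $L$: one has $Lf = \tfrac12 f$ (this is what Colding--Minicozzi prove, and what the paper rederives), so $f$ lies in the kernel of $L - \tfrac12 = \Delta_\Sigma - \tfrac12\langle\overrightarrow{x},\nabla_\Sigma(\cdot)\rangle + |A|^2$. The log-trick applied to that operator yields only
\[
\int_\Sigma \eta^2 |A|^2\, e^{-|\overrightarrow{x}|^2/4} \;\le\; \int_\Sigma |\nabla_\Sigma\eta|^2\, e^{-|\overrightarrow{x}|^2/4},
\]
with no extra $+\tfrac12$ on the left. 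This matters: your version would, after sending $R\to\infty$, force $\tfrac12\int_\Sigma e^{-|\overrightarrow{x}|^2/4}\le 0$ for \emph{every} entire graphical self-shrinker --- including the hyperplanes through the origin that you are trying to characterize --- which is absurd. You notice this (``is already a contradiction''), but then jump to ``so $\Sigma$ must be totally geodesic''; a contradiction derived from a too-strong inequality yields nothing. With the correct inequality the cut-off argument gives $\int_\Sigma |A|^2 e^{-|\overrightarrow{x}|^2/4}=0$, hence $|A|\equiv 0$, and the self-shrinker equation then forces the hyperplane to pass through the origin, so $u$ is linear rather than merely affine.

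A minor remark: the paper uses simple linear cut-offs on $B_{R+1}\setminus B_R$ rather than logarithmic ones; the Gaussian weight already kills the polynomial volume on the annulus, so the logarithmic trick is unnecessary here.
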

\begin{rem}
\label{Rem1}
We say that the graph of $u$ in Theorem \ref{Thm1} is an entire graphical self-shrinker in $\Real^{n+1}$ and denote $\text{Graph}_u$ by  $\Sigma$. Note that the left hand side of equation (\ref{Eqn1}) is minus the mean curvature $H$ of $\Sigma$ and the right hand side of equation (\ref{Eqn1}) is $-\langle\overrightarrow{x},\textbf{n}\rangle/2$, where $\textbf{n}$ is the upward unit normal of $\Sigma$. Therefore equation (\ref{Eqn1}) is equivalent to 
\begin{equation}
\label{Eqn3} H=\frac{1}{2}\langle\overrightarrow{x},\textbf{n}\rangle.
\end{equation}
\end{rem}
\section{Proof of Theorem \ref{Thm1}}
First, we prove a weighted stability inequality for smooth self-shrinkers in $\Real^{n+1}$ that are entire graphs. In \cite{CM1} and \cite{CM2}, Colding and Minicozzi introduced the operator
\begin{equation} 
L=\Delta_\Sigma
-\frac{1}{2}\langle\overrightarrow{x},\nabla_\Sigma\rangle
+|A|^2+\frac{1}{2},
\end{equation}
which is given by the linearization of the self-shrinker equation. The weighted stability inequality in Lemma \ref{Lem1} below is equivalent to that $-(L-\frac{1}{2})\geq 0$. On the other hand, they showed that $LH=H$ in Lemma $5.5$ of \cite{CM1}. Let $\eta$ be a smooth and compactly supported function on $\Real^{n+1}$. And suppose that $\eta$ is identically one on $B_R$ and cuts off linearly to zero on $B_{R+1}\setminus B_R$, where $B_R$ is the open ball in $\Real^{n+1}$ centered at the origin with radius $R>0$. Thus, similar to the computation in the appendix of \cite{CM2}, we get
\begin{equation*}
-\frac{1}{2}\int_{\Sigma}\eta^2H^2\text{e}^{-\frac{|\overrightarrow{x}|^2}{4}}
\leq-\int_{\Sigma}\eta HL(\eta H)\text{e}^{-\frac{|\overrightarrow{x}|^2}{4}}\leq -\int_{B_R\cap\Sigma}H^2\text{e}^{-\frac{|\overrightarrow{x}|^2}{4}}+\int_{\Sigma\setminus B_R}H^2\text{e}^{-\frac{|\overrightarrow{x}|^2}{4}}.
\end{equation*} 
Hence, if self-shrinkers have polynomial volume growth and let $R\To\infty$, then it follows from the monotone convergence theorem that $H=0$. And the only smooth embedded minimal cones in $\Real^{n+1}$ are hyperplanes.
\begin{lem}
\label{Lem1}
Let $\eta$ be a smooth compactly supported function on $\Real^{n+1}$. Then
\begin{equation}
\label{Eqn8}
\int_\Sigma\eta^2|A|^2\emph{e}^{-\frac{|\overrightarrow{x}|^2}{4}}\leq \int_\Sigma|\nabla_\Sigma\eta|^2
\emph{e}^{-\frac{|\overrightarrow{x}|^2}{4}},
\end{equation}
where $A=(a_{ij})$ is the second fundamental form of $\Sigma$ in $\Real^{n+1}$ and $\nabla_\Sigma$ is the gradient of a function with respect to $\Sigma$.
\end{lem}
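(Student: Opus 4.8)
The plan is to derive \eqref{Eqn8} as the stability inequality for the $L$-operator, exploiting the identity $LH = H$ together with the fact that the eigenfunctions of $L$ pair against $H$ in a controlled way. Concretely, $L$ is self-adjoint with respect to the weighted measure $d\mu = \mathrm{e}^{-|\overrightarrow{x}|^2/4}\,d\mathrm{vol}_\Sigma$, and one checks the integration-by-parts formula
\begin{equation*}
\int_\Sigma \langle L f, g\rangle\,\mathrm{e}^{-\frac{|\overrightarrow{x}|^2}{4}} = -\int_\Sigma \langle \nabla_\Sigma f, \nabla_\Sigma g\rangle\,\mathrm{e}^{-\frac{|\overrightarrow{x}|^2}{4}} + \int_\Sigma \Big(|A|^2 + \tfrac12\Big) fg\,\mathrm{e}^{-\frac{|\overrightarrow{x}|^2}{4}}
\end{equation*}
for compactly supported $f,g$; this is the drift-Laplacian Green's identity, with $\Delta_\Sigma - \tfrac12\langle\overrightarrow{x},\nabla_\Sigma\rangle$ the weighted Laplacian. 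Since in Lemma \ref{Lem1} the test function $\eta$ lives on $\Real^{n+1}$ but $\Sigma$ is only complete (not compact), one must be slightly careful that $\eta|_\Sigma$ is still compactly supported and smooth on $\Sigma$, which it is because $\eta$ has compact support in $\Real^{n+1}$ and $\Sigma$ is a smooth properly embedded hypersurface (being an entire graph).

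The key step is the choice of test function. I would take $f = g = \eta$ in the Green's identity above. This gives
\begin{equation*}
\int_\Sigma \eta\, L\eta\,\mathrm{e}^{-\frac{|\overrightarrow{x}|^2}{4}} = -\int_\Sigma |\nabla_\Sigma\eta|^2\,\mathrm{e}^{-\frac{|\overrightarrow{x}|^2}{4}} + \int_\Sigma \Big(|A|^2 + \tfrac12\Big)\eta^2\,\mathrm{e}^{-\frac{|\overrightarrow{x}|^2}{4}}.
\end{equation*}
The assertion that $-(L - \tfrac12) \geq 0$ — equivalently, that $\int_\Sigma \eta(L-\tfrac12)\eta\,\mathrm{e}^{-|\overrightarrow{x}|^2/4}\le 0$ for all compactly supported $\eta$ — then rearranges precisely to \eqref{Eqn8} after cancelling the two $\tfrac12\int\eta^2$ terms. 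So everything reduces to proving this operator inequality. The way to see it is to use $LH=H$: heuristically $H$ is a positive eigenfunction (with eigenvalue $1$) of $L$, and a Schrödinger-type operator with a positive solution to $Lw = \lambda w$ satisfies $L - \lambda \le 0$ in the stability (spectral) sense. I would make this rigorous by the standard logarithmic cutoff / Fischer–Colbrie–Schoen trick: set $w = H$, assume first $H>0$, write $\eta = w\varphi$, and compute $\int \eta(L-\tfrac12)\eta$; an integration by parts using $Lw = w$ produces $-\int w^2|\nabla_\Sigma\varphi|^2 \le 0$, which is the desired inequality.

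The main obstacle is handling the zeros of $H$ and the non-compactness. If $H$ vanishes somewhere the substitution $\varphi = \eta/H$ is singular, so I would instead perturb: replace $H$ by $\sqrt{H^2+\eps}$ or, more cleanly, observe that it suffices to prove $-(L-\tfrac12)\ge 0$, i.e. to show $\lambda_1\big(-(L-\tfrac12)\big)\ge 0$ on every compact domain $\Omega\subset\Sigma$, and then invoke the criterion that a Schrödinger operator on $\Omega$ is nonnegative iff it admits a positive supersolution on $\Omega$ — here $H$ (or a limit of $H+\eps$) serves, using $(L-\tfrac12)H = \tfrac12 H \ge 0$ where $H\ge0$, and on the nodal domains where $H<0$ one uses $-H$. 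Actually the slickest route, and the one I expect the author takes, is the direct computation already displayed in the excerpt: plug $\eta H$ into the quadratic form, use $L(\eta H) = \eta H + (\text{terms})$ expanded via the product rule for $L$, integrate by parts to get $-\int \eta H L(\eta H)\,\mathrm{e}^{-|\overrightarrow{x}|^2/4} = \int |\nabla_\Sigma(\eta H)|^2 \mathrm{e}^{-|\overrightarrow{x}|^2/4} - \int(|A|^2+\tfrac12)\eta^2H^2\mathrm{e}^{-|\overrightarrow{x}|^2/4}$, and combine with $LH=H$; the cross terms reorganize into $-\tfrac12\int\eta^2H^2\mathrm{e}^{-|\overrightarrow{x}|^2/4}\le 0$ on the left, and then one replaces the role of $H$ by a general test function by the spectral argument. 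Either way, once the operator inequality $-(L-\tfrac12)\ge0$ is in hand, \eqref{Eqn8} is immediate by cancellation; the only real work is justifying that inequality via the positive-eigenfunction criterion while being honest about the zeros of $H$ and the completeness (rather than compactness) of $\Sigma$.
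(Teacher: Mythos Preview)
Your overall framework is right---the inequality \eqref{Eqn8} is exactly the spectral statement $-(L-\tfrac12)\ge 0$, and the way to prove such a statement is via a positive solution of $(L-\tfrac12)w=0$ together with the log substitution $g=\log w$. The gap is that you chose the wrong function $w$. You try to run the argument with $w=H$, but $LH=H$ means $(L-\tfrac12)H=\tfrac12 H$, not zero. The Fischer--Colbrie--Schoen criterion for $-(L-\tfrac12)\ge 0$ requires a positive $w$ with $(L-\tfrac12)w\le 0$; your computation $(L-\tfrac12)H=\tfrac12 H\ge 0$ has the \emph{opposite} sign, so $H$ is not a supersolution in the relevant sense and the argument does not close. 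At best, $H$ (where positive) would yield $-(L-1)\ge 0$, which only gives $\int_\Sigma \eta^2|A|^2\,\mathrm{e}^{-|\overrightarrow{x}|^2/4}\le \int_\Sigma|\nabla_\Sigma\eta|^2\,\mathrm{e}^{-|\overrightarrow{x}|^2/4}+\tfrac12\int_\Sigma\eta^2\,\mathrm{e}^{-|\overrightarrow{x}|^2/4}$, and the extra term does not go to zero under the cutoffs used later. On top of this, $H=\tfrac12\langle\overrightarrow{x},\textbf{n}\rangle$ vanishes at the origin and changes sign on a graphical shrinker, so the positivity hypothesis fails outright and your $\eps$-perturbations do not repair the eigenvalue mismatch.

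The paper fixes both problems at once by using a different function: $f=\langle\textbf{n},v_{n+1}\rangle=1/\sqrt{1+|Du|^2}$. Because $\Sigma$ is an entire graph, $f>0$ everywhere, and Colding--Minicozzi's computation gives $Lf=\tfrac12 f$, i.e.\ exactly $(L-\tfrac12)f=0$. Then $g=\log f$ satisfies $\Delta_\Sigma g-\tfrac12\langle\overrightarrow{x},\nabla_\Sigma g\rangle+|\nabla_\Sigma g|^2+|A|^2=0$; multiplying by $\eta^2\mathrm{e}^{-|\overrightarrow{x}|^2/4}$, integrating by parts against the drift Laplacian, and absorbing the cross term with Cauchy--Schwarz yields \eqref{Eqn8} directly. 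So the missing idea in your attempt is simply to replace $H$ by $\langle\textbf{n},v_{n+1}\rangle$---the graphical hypothesis is what supplies a genuinely positive eigenfunction at the correct eigenvalue $\tfrac12$.
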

\begin{proof}
Let $f=\langle\textbf{n},v_{n+1}\rangle$. Colding and Minicozzi showed that $Lf=\frac{1}{2}f$ in Lemma $5.5$ of \cite{CM1}. For self-containedness, we include the proof here.  Indeed, at any point $\overrightarrow{x}\in\Sigma$, choose a local geodesic frame $e_1,\dots,e_n$, that is, $\langle e_i,e_j\rangle=\delta_{ij}$ and $\nabla^\Sigma_{e_i}e_j(\overrightarrow{x})=0$. Thus, 
\begin{eqnarray*}
\nabla_\Sigma f&=&\sum_{i=1}^n\langle\nabla_{e_i}\textbf{n},v_{n+1}\rangle e_i
=\sum_{i,j=1}^n-a_{ij}\langle e_j,v_{n+1}\rangle e_i,\\
\Delta_\Sigma f&=&\sum_{i=1}^n
\langle\nabla_{e_i}\nabla_{e_i}\textbf{n},v_{n+1}\rangle
=\sum_{i,j=1}^n
-a_{ij;i}\langle e_j,v_{n+1}\rangle-a_{ij}\langle\nabla_{e_i}e_j,v_{n+1}\rangle\\
&=&\langle\nabla_\Sigma H,v_{n+1}\rangle-|A|^2\langle\textbf{n},v_{n+1}\rangle,
\end{eqnarray*}
where $H=-\sum_{i=1}^na_{ii}$ is the mean curvature of $\Sigma$. Since $\Sigma$ is self-shrinker, i.e. $H=\frac{1}{2}\langle\overrightarrow{x},\textbf{n}\rangle$, thus,
\begin{equation}
\nabla_\Sigma H=\frac{1}{2}\sum_{i=1}^n
\langle\overrightarrow{x},\nabla_{e_i}\textbf{n}\rangle e_i
=-\frac{1}{2}\sum_{i,j=1}^na_{ij}
\langle\overrightarrow{x},e_j\rangle e_i.
\end{equation}
Hence,
\begin{equation}
\Delta_\Sigma f=\frac{1}{2}\langle\overrightarrow{x},\nabla_\Sigma f\rangle
-|A|^2 f.
\end{equation} 
Note that the upward unit normal of $\Sigma$ is 
\begin{equation}
\textbf{n}=\frac{(-Du,1)}
{\sqrt{1+|Du|^2}},
\end{equation}
and thus $f=\langle\textbf{n},v_{n+1}\rangle
=1/\sqrt{1+|Du|^2}>0$. 
Hence, the function $g=\log f$ is well defined and $g$ satisfies the differential equation
\begin{equation}
\label{Eqn2}
\Delta_\Sigma g-\frac{1}{2}\langle\overrightarrow{x},\nabla_\Sigma g\rangle+|\nabla_\Sigma g|^2+|A|^2=0.
\end{equation}
Multiplying by $\eta^2\text{e}^{-\frac{|\overrightarrow{x}|^2}{4}}$ on both sides of equation (\ref{Eqn2}) and integrating over $\Sigma$, gives
\begin{eqnarray*}
0&=&\int_\Sigma\eta^2\text{div}_\Sigma
\left(\text{e}^{-\frac{|\overrightarrow{x}|^2}{4}}
\nabla_\Sigma g\right)+\int_\Sigma\eta^2\left(|\nabla_\Sigma g|^2+|A|^2\right)\text{e}^{-\frac{|\overrightarrow{x}|^2}{4}}\\
&=&-\int_\Sigma 2\eta\langle\nabla_\Sigma\eta,\nabla_\Sigma g\rangle \text{e}^{-\frac{|\overrightarrow{x}|^2}{4}}
+\int_\Sigma\eta^2\left(|\nabla_\Sigma g|^2+|A|^2\right)
\text{e}^{-\frac{|\overrightarrow{x}|^2}{4}}\\
&\geq&-\int_\Sigma\left(\eta^2|\nabla_\Sigma g|^2+|\nabla_\Sigma\eta|^2\right)
\text{e}^{-\frac{|\overrightarrow{x}|^2}{4}}
+\int_\Sigma\eta^2\left(|\nabla_\Sigma g|^2+|A|^2\right)
\text{e}^{-\frac{|\overrightarrow{x}|^2}{4}}\\
&\geq&\int_\Sigma\left(-|\nabla_\Sigma\eta|^2+\eta^2|A|^2\right)
\text{e}^{-\frac{|\overrightarrow{x}|^2}{4}}.
\end{eqnarray*}
\end{proof}
Second, we study the volume growth of entire graphical self-shrinkers. Let $B_R$ be the open ball in $\Real^{n+1}$ centered at the origin with radius $R$, and $\mathcal{B}_R$ be the open ball in $\Real^n$ centered at the origin with radius $R$. Also, we define $M_R=\sup_{\mathcal{B}_R}|u|$. We show that
\begin{lem}
\label{Lem2}
There exists a constant $C_0>0$, depending only on $n$, such that 
\begin{equation}
\emph{Vol}(\Sigma\cap B_R)\leq C_0R^n(1+R^n+M_R^2),
\end{equation} 
where \emph{Vol} stands for volume.
\end{lem}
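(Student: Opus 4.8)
The plan is to pass to the graph parametrization $x\mapsto(x,u(x))$ of $\Sigma$. A point $(x,u(x))$ lies in $B_R$ exactly when $x$ lies in the shadow $D_R:=\{x\in\Real^n:\;|x|^2+u(x)^2<R^2\}$, and the area element of $\Sigma$ pulls back to $\sqrt{1+|Du|^2}\,dx$, so $\mathrm{Vol}(\Sigma\cap B_R)=\int_{D_R}\sqrt{1+|Du|^2}\,dx$. Two elementary observations carry most of the argument: (i) $D_R\subseteq\mathcal{B}_R$, whence $\mathrm{Vol}(D_R)\le\mathrm{Vol}(\mathcal{B}_R)$, a dimensional multiple of $R^n$; and (ii) on $D_R$ one has both $|x|<R$ and $|u|<R$ — it is this second point, rather than any a priori control on $Du$, that keeps all constants dimensional. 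Writing $\sqrt{1+|Du|^2}=\tfrac{1}{\sqrt{1+|Du|^2}}+\tfrac{|Du|^2}{\sqrt{1+|Du|^2}}$, the first summand contributes at most $\mathrm{Vol}(D_R)$, so everything reduces to estimating $\int_{D_R}|Du|^2(1+|Du|^2)^{-1/2}\,dx$.

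For this term I would integrate by parts, writing $|Du|^2(1+|Du|^2)^{-1/2}=\langle Du,\,Du(1+|Du|^2)^{-1/2}\rangle$, so that
\begin{equation*}
\int_{D_R}\frac{|Du|^2}{\sqrt{1+|Du|^2}}\,dx
=\int_{\partial D_R} u\,\Big\langle\frac{Du}{\sqrt{1+|Du|^2}},\nu\Big\rangle\,d\mathcal{H}^{n-1}
-\int_{D_R} u\,\mathrm{div}\Big(\frac{Du}{\sqrt{1+|Du|^2}}\Big)\,dx ,
\end{equation*}
with $\nu$ the outward unit normal of $D_R$, and then substitute the self-shrinker equation $(\ref{Eqn1})$ for the divergence. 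The last integral becomes $-\tfrac12\int_{D_R}u\,(x\cdot Du)(1+|Du|^2)^{-1/2}+\tfrac12\int_{D_R}u^2(1+|Du|^2)^{-1/2}$, and both pieces are immediate from (i)--(ii): since $|Du|(1+|Du|^2)^{-1/2}<1$, on $D_R$ we have $|u\,(x\cdot Du)|(1+|Du|^2)^{-1/2}\le|u|\,|x|<R^2$ and $u^2(1+|Du|^2)^{-1/2}\le u^2\le\min\{M_R^2,R^2\}$, so these interior contributions are bounded by a dimensional multiple of $R^n(1+R^n+M_R^2)$.

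The remaining — and genuinely delicate — ingredient is the boundary term $\int_{\partial D_R}u\,\langle Du(1+|Du|^2)^{-1/2},\nu\rangle\,d\mathcal{H}^{n-1}$. On $\partial D_R$ one has $|u|\le R$ and the inner product has modulus at most $1$, so it is at most $R\,\mathcal{H}^{n-1}(\partial D_R)$; the graph map is distance-nondecreasing and, for a.e.\ $R$, takes $\partial D_R$ onto $\Sigma\cap\partial B_R$, so $\mathcal{H}^{n-1}(\partial D_R)\le\mathcal{H}^{n-1}(\Sigma\cap\partial B_R)$, and by the coarea formula $\mathcal{H}^{n-1}(\Sigma\cap\partial B_R)\le\tfrac{d}{dR}\mathrm{Vol}(\Sigma\cap B_R)$ for a.e.\ $R$. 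I expect the main difficulty to lie precisely in closing this step — turning the resulting differential inequality for $V(R):=\mathrm{Vol}(\Sigma\cap B_R)$ into an upper bound rather than the tautological lower bound that monotonicity alone supplies — which again forces one to exploit that $|u|<R$ on $D_R$. An alternative that avoids boundary terms is to run the same integration by parts against the pullback $\tilde\eta$ of a cutoff $\eta$ equal to $1$ on $B_R$, supported in $B_{2R}$, with $|\nabla\eta|\le 1/R$; then the only extra term is a cross term $-2\int u\,\tilde\eta\,\langle\nabla\tilde\eta,\,Du(1+|Du|^2)^{-1/2}\rangle$, supported where $|u|<2R$, to be estimated in the same spirit. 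Granting this step, summing the contributions yields $\mathrm{Vol}(\Sigma\cap B_R)\le C_0 R^n(1+R^n+M_R^2)$ with $C_0$ depending only on $n$.
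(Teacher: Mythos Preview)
Your strategy is quite different from the paper's. Instead of working in graph coordinates, the paper extends the unit normal $\textbf{n}$ to the cylinder $\mathcal{B}_R\times\Real$ by pulling back under the vertical projection, sets $\omega(X_1,\dots,X_n)=\det(X_1,\dots,X_n,\textbf{n})$, and applies Stokes' theorem on the solid region $\Omega\subset\Real^{n+1}$ bounded by $\Sigma\cap B_R$ and a cap $\partial B_R^1\subset\partial B_R$ of area at most $\omega_nR^n$. Since $\omega$ restricts to the volume form on $\Sigma$, has comass at most $1$, and satisfies $|d\omega|\le\tfrac12(|x|^2+u^2)^{1/2}$ by the self-shrinker equation, one gets $\mathrm{Vol}(\Sigma\cap B_R)\le\omega_nR^n+\tfrac12\int_\Omega(|x|^2+u^2)^{1/2}$, and the last integral is handled via $\Omega\subset\mathcal{B}_R\times[-\max(R,M_R),\max(R,M_R)]$. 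This calibration argument sidesteps any boundary term on $\Sigma$ itself.

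Your integration-by-parts route is natural, but the boundary term does not close as written. The differential inequality you arrive at is $V(R)\le CR^{n+2}+R\,V'(R)$, which is only a \emph{lower} bound on $V'$ and imposes no upper bound on $V$ (for instance $V(R)=e^R$ satisfies it for large $R$), so that branch is genuinely stuck. The cutoff alternative has a hidden issue too: with $\tilde\eta(x)=\eta(x,u(x))$ one has $\nabla_x\tilde\eta=\nabla'\eta+\eta_{x_{n+1}}\,Du$, so the cross term contains a piece $-2\int u\,\tilde\eta\,\eta_{x_{n+1}}\,|Du|^2(1+|Du|^2)^{-1/2}$ whose coefficient $|u\,\eta_{x_{n+1}}|$ is of order one (not small) on the annulus and therefore cannot be absorbed into the left-hand side. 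The easy repair is to abandon $D_R$ and integrate by parts directly over the round ball $\mathcal{B}_R\subset\Real^n$: since $D_R\subset\mathcal{B}_R$ you still have $\mathrm{Vol}(\Sigma\cap B_R)\le\int_{\mathcal{B}_R}\sqrt{1+|Du|^2}$, but now $\partial\mathcal{B}_R$ has $(n{-}1)$-measure a dimensional constant times $R^{n-1}$ and $|u|\le M_R$ there, so the boundary term is at most $C\,M_RR^{n-1}$; the interior terms contribute $C(M_RR+M_R^2)R^n$ exactly as in your computation, and AM--GM on the mixed products gives the stated bound.
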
 
\begin{proof}
First, using the pull back of \textbf{n} induced by the projection $\pi:\mathcal{B}_R\times\Real\To\mathcal{B}_R$, we extend the vector field \textbf{n} on the cylinder $\mathcal{B}_R\times\Real$. Let $\omega$ be the $n$-form on the cylinder $\mathcal{B}_R\times\Real$ given by that for any $X_1\dots,X_n\in\Real^{n+1}$,
\begin{equation}
\label{Eqn9}
\omega(X_1,\dots,X_n)=\det(X_1,\dots,X_n,\textbf{n}).
\end{equation}
Then, in coordinates $(x_1,\dots,x_{n+1})$, we have
\begin{equation}
\omega=\frac{dx_1\wedge\cdots
\wedge dx_n
-\sum_{i=1}^n
(-1)^{in}u_{x_i}dx_{i+1}\wedge\cdots\wedge dx_{n+1}\wedge dx_1\wedge\cdots\wedge dx_{i-1}}
{\sqrt{1+|Du|^2}},
\end{equation}
and 
\begin{equation}
d\omega=(-1)^{n+1}\text{div}
\left(\frac{Du}{\sqrt{1+|Du|^2}}\right)
dx_1\wedge\cdots\wedge dx_{n+1}
\end{equation}
\begin{equation*}
\quad\quad\quad\quad=(-1)^{n+1}\frac{x_1u_{x_1}+\cdots+x_nu_{x_n}-u}
{2\sqrt{1+|Du|^2}}
dx_1\wedge\cdots\wedge dx_{n+1}.
\end{equation*}
By the H\"{o}lder inequality,
\begin{equation}
 |d\omega|\leq\frac{1}{2}\sqrt{x_1^2+\cdots+x_n^2+u^2},
\end{equation}
and by (\ref{Eqn9}), given any orthogonal unit vectors $X_1,\dots,X_n$ at a point $(x_1,\dots,x_{n+1})$,
\begin{equation}
|\omega(X_1,\dots X_n)|\leq 1,
\end{equation} 
where the equality holds if and only if 
\begin{equation}
X_1,\dots,X_n\in T_{(x_1,\dots,x_n,u(x_1,\dots,x_n))}\Sigma.
\end{equation}
For minimal graphs, where $d\omega=0$, such an $\omega$ is called a calibration; see page $3$ in \cite{CM4}. 
Note that $\partial B_R\cap\Sigma$ divides $\partial B_R$ into two components and let $\partial B_R^1$ be the component which has volume at most equal to $\omega_nR^n$. Let $\Omega$ be the region enclosed by $\partial B_R^1\cup\Sigma$. Hence, by Stokes' Theorem,\footnote[1]{we choose the orientation of $\Sigma$ to be compatible with the upward unit normal and the orientation of $\partial B_R$ to be compatible with the inward unit normal. Thus the orientation of $\Omega$ is chosen such that the orientation of $\partial\Omega$ induced from $\Omega$ coincides with that we just defined above.} 
\begin{eqnarray*}
\text{Vol}(\Sigma\cap B_R)&\leq&\int_{\Sigma\cap\partial\Omega}\omega
=\int_{\partial B_R^1}\omega+\int_\Omega d\omega\\
&\leq& \text{Vol}(\partial B_R^1)
+\frac{1}{2}\int_\Omega\sqrt{x_1^2+\cdots+x_n^2+u^2}.
\end{eqnarray*}
Since $\Omega$ is contained in the cylinder $\tilde{\Omega}=\mathcal{B}_R\times [-\max\{R,M_R\},\max\{R,M_R\}]$, we conclude that 
\begin{equation}
\text{Vol}(\Sigma\cap B_R)\leq \omega_n R^n+\frac{1}{2}\int_{\tilde{\Omega}}(R+M_R)\leq\omega_n R^n
+\omega_n R^n(R+M_R)^2.
\end{equation}
Therefore, 
\begin{equation}
\text{Vol}(\Sigma\cap B_R)\leq 2\omega_n R^n(1+R^2+M_R^2),
\end{equation}
and $C_0=2\omega_n$.
\end{proof}
Third, we use the maximum principle for the mean curvature flow to bound the $L^\infty$ norm of $u$ on $\mathcal{B}_R$.
\begin{lem}
\label{Lem3}
Suppose that $R>1$. Then there exists a constant $C_1>0$, depending on $n$ and $M_{2\sqrt{n}}$, such that $M_R\leq C_1R$. In particular, entire graphical self-shrinkers have polynomial volume growth.
\end{lem}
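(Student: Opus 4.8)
The plan is to run the self-similar mean curvature flow generated by $\Sigma$ and to trap it between two shrinking round spheres via the avoidance principle.

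\emph{Step 1: the self-similar flow.} By Remark~\ref{Rem1} we have $H=\tfrac12\langle\overrightarrow{x},\textbf{n}\rangle$ on $\Sigma$, so the family $\Sigma_t=\sqrt{-t}\,\Sigma$, $t\in[-1,0)$, moves by mean curvature; each $\Sigma_t$ is the entire graph over $\Real^n$ of $u_t(y)=\sqrt{-t}\,u(y/\sqrt{-t})$, and $\Real^{n+1}\setminus\Sigma_t$ splits into an upper region $\{x_{n+1}>u_t\}$ and a lower region $\{x_{n+1}<u_t\}$. I would first record that
\[
\sup_{\mathcal{B}_1}|u_{-1/R^2}|=\frac1R\sup_{|z|\le R}|u(z)|=\frac{M_R}{R}\qquad(R>1),
\]
so it suffices to bound $\sup_{\mathcal{B}_1}|u_t|$ by a constant depending only on $n$ and $M_{2\sqrt n}$, uniformly in $t\in[-1,0)$.

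\emph{Step 2: sphere barriers.} Set $L=2\sqrt n+M_{2\sqrt n}+1$ and, for each sign, let $S^{\pm}_t$ be the sphere of radius $\rho(t)=\sqrt{2n(1-t)}$ centered at $(0,\dots,0,\pm L)$; this is a shrinking‑sphere solution of mean curvature flow with $\rho(-1)=2\sqrt n$, and since $\rho(t)\ge\sqrt{2n}>1$ on $[-1,0]$ it never degenerates. The choice $\rho(-1)=2\sqrt n$ (forced by requiring $\rho(0)^2=\rho(-1)^2-2n>0$) is what makes $M_{2\sqrt n}$ appear: at $t=-1$ the sphere $S^{+}_{-1}$ lies over $\mathcal{B}_{2\sqrt n}$, and its lowest point over each $x'\in\mathcal{B}_{2\sqrt n}$ has height $\ge L-2\sqrt n=M_{2\sqrt n}+1>u(x')$, so $S^{+}_{-1}$ lies strictly in the upper region of $\Real^{n+1}\setminus\Sigma$; symmetrically $S^{-}_{-1}$ lies strictly in the lower region.

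\emph{Step 3: avoidance and conclusion.} Since $S^{\pm}_t$ is compact and $\Sigma_t$ is properly embedded, the maximum principle for mean curvature flow keeps them disjoint for all $t\in[-1,0)$, and by continuity $S^{+}_t$ stays in the upper region and $S^{-}_t$ in the lower region throughout. Hence for $|x'|\le1$ (note $\rho(t)>1$) the lowest point of $S^{+}_t$ over $x'$ lies above $\Sigma_t$, which gives $u_t(x')<L-\sqrt{\rho(t)^2-|x'|^2}\le L$, and symmetrically $u_t(x')>-L$; thus $\sup_{\mathcal{B}_1}|u_t|\le L$ on $[-1,0)$. With Step 1 this yields $M_R\le LR$ for $R>1$, so $C_1=2\sqrt n+M_{2\sqrt n}+1$ works. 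Combined with Lemma~\ref{Lem2}, $\mathrm{Vol}(\Sigma\cap B_R)\le C_0R^n(1+R^n+C_1^2R^2)$, which is polynomial in $R$.

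\emph{Expected main obstacle.} The only step that is not a routine computation is the application of the avoidance principle when $\Sigma_t$ is non-compact: one has to use that the competitor sphere is compact and $\Sigma_t$ properly embedded, so that the distance between $S^{\pm}_t$ and $\Sigma_t$ is attained and the strong maximum principle excludes a first contact time. The shrinking‑sphere ODE, the non‑degeneracy of $\rho(t)$ on $[-1,0]$, the initial disjointness, and the scaling bookkeeping in Step 1 should all be straightforward.
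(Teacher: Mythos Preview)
Your proposal is correct and follows essentially the same strategy as the paper: run the self-similar mean curvature flow generated by $\Sigma$ and trap it between two shrinking spheres of initial radius $2\sqrt{n}$ via the avoidance principle, then read off the linear bound on $M_R$. The only difference is bookkeeping---the paper reparametrizes the flow on $[0,R^2]$ (with $w(\cdot,R^2)=u$) and sets up $R$-dependent barriers, whereas you parametrize once on $[-1,0)$ and evaluate at $t=-1/R^2$; the paper also spells out the first-contact/strong-maximum-principle argument for the noncompact avoidance step that you flag as the main obstacle.
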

\begin{proof}
Define 
\begin{equation}
w(x_1,\dots,x_n,t)=\sqrt{R^2+1-t}\cdot
u\left(x_1/\sqrt{R^2+1-t},\dots, x_n/\sqrt{R^2+1-t}\right), 
\end{equation}
where $t\in[0,R^2]$. We derive the evolution equation of $w$:
\begin{equation}
\label{Eqn4}
\frac{dw}{dt}=\sqrt{1+|Dw|^2}\cdot
\text{div}\left(\frac{Dw}{\sqrt{1+|Dw|^2}}\right).
\end{equation}
Thus $\{\Sigma_t=\text{Graph}_{w(\cdot,t)}\}_{t>0}$ is a smooth family of hypersurfaces in $\Real^{n+1}$ moving by mean curvature (after composing with appropriate tangential diffeomorphisms). Similar to the arguments of Lemma $3$ in \cite{CM3} and \cite{E}, we construct suitable open balls as barriers. Let $\rho>0$ be some constant to be chosen later and $a^+=\sup_{\mathcal{B}_{\rho R}}w(x_1,\dots,x_n,0)+\rho R+1$. Consider the open ball $B^+_0$ centered at 
$(0,\dots,0,a^+)$ with radius $\rho R$. It is easy to check that $x_1^2+\cdots+x_n^2+(w(x_1,\dots,x_n,0)-a^+)^2>\rho R$, for any $(x_1,\dots,x_n)\in\Real^n$. Thus $B^+_0$ and $\Sigma_0$ are disjoint. Let $B^+_t$ be the open ball in $\Real^{n+1}$ centered at $(0,\dots,0,a^+)$ with radius $R_1^+=\sqrt{\rho^2R^2-2nt}$. Thus $\{\partial B^+_t\}_{t>0}$ is a smooth family of hypersurfaces moving by mean curvature and it shrinks to its center at $T=\rho^2R^2/2n$. We choose $\rho^2>2n+1$ to guarantee that $\partial B^+_t$ is not contained in the cylinder $\mathcal{B}_R\times\Real$, for $t\in[0,R^2]$. By the maximum principle for the mean curvature flow, $\Sigma_t$ and $\partial B^+_t$ are always disjoint for $t\in[0,R^2]$. Indeed, assume that 
$T_0$ is the first time that $\Sigma_{T_0}$ and $\partial B_{T_0}^+$ are not disjoint. Note that at every time $t$, the distance between $\Sigma_t$ and $\partial B_t^+$ can be achieved by a straight line segment perpendicular to both $\Sigma_t$ and $\partial B^+_t$. And outside $\mathcal{B}_{2\rho R}\times\Real$, the distance between $\Sigma_t$ and $\partial B_t^+$ is larger than $\rho R$. Thus at $T_0$, $\Sigma_{T_0}$ touches $\partial B_{T_0}^+$ at some point $\overrightarrow{x_0}\in\Real^{n+1}$, and for $t$ close to $T_0$, $\Sigma_t$ and $\partial B_t^+$ can be written as graphs over the tangent hyperplane of $\Sigma_{T_0}$ at $\overrightarrow{x_0}$ in a small neighborhood of $\overrightarrow{x_0}$. Thus for $t$ close to $T_0$, the evolution equations of the corresponding graphs are locally uniformly parabolic. Hence, the assumption violates the maximum principle for uniformly parabolic partial differential equations. This is a contradiction.
Therefore, we get the upper bound for $w$ at time $t=R^2$:
\begin{eqnarray*}
\sup_{\mathcal{B}_R}w(x_1,\dots,x_n,R^2)&\leq& a^+-\sqrt{\rho^2-2n-1}\cdot R\\
&\leq&\sup_{\mathcal{B}_{\rho R}}w(x_1,\dots,x_n,0)+\rho R+1-\sqrt{\rho^2-2n-1}\cdot R\\
&\leq&\sup_{\mathcal{B}_{\rho R}}w(x_1,\dots,x_n,0)+\sqrt{2n+1}R+1.
\end{eqnarray*}
Similarly, define $a^-=\inf_{\mathcal{B}_{\rho R}}w(x_1,\dots,x_n,0)-\rho R-1$ and 
compare $\Sigma_t$ with $\partial B_t^-$, which is centered at $(0,\dots,0,a^-)$ with radius $R_1^-=\sqrt{\rho^2 R^2-2nt}$. Therefore,
\begin{equation}
\inf_{\mathcal{B}_R}w(x_1,\dots,x_n,R^2)\geq
\inf_{\mathcal{B}_{\rho R}}w(x_1,\dots,x_n,0)-\sqrt{2n+1}R-1.
\end{equation}
In sum,
\begin{equation}
\label{Eqn5}
\sup_{\mathcal{B}_R}|w|(x_1,\dots,x_n,R^2)\leq \sup_{\mathcal{B}_{\rho R}}|w|(x_1,\dots,x_n,0)+\sqrt{2n+1}R+1.
\end{equation}
Note that $w(x_1,\dots,x_n,R^2)=u(x_1,\dots,x_n)$ and 
\begin{equation}
w(x_1,\dots,x_n,0)=\sqrt{R^2+1}\cdot u(x_1/\sqrt{R^2+1},\dots,x_n/\sqrt{R^2+1}).
\end{equation}
Thus, by inequality (\ref{Eqn5}) and the assumption that $R>1$, we conclude that  
\begin{equation}
\label{Eqn6}
\sup_{\mathcal{B}_R}|u|\leq 2(\sup_{\mathcal{B}_\rho}|u|+\sqrt{2n+1})R.
\end{equation}
Hence, choosing $\rho=2\sqrt{n}$ and $C_1=2(\sup_{\mathcal{B}_\rho}|u|+\sqrt{2n+1})$, gives $M_R\leq C_1R$ .
\end{proof}
Finally, we choose a sequence of $R_j\to \infty$ and a sequence of smooth cut-off functions $\eta_j$ , which satisfies that $0\leq \eta_j\leq 1$, $\eta_j$ is $1$ inside $B_{R_j}$ and vanishes outside $B_{R_j+1}$, and $|\nabla_\Sigma\eta_j|\leq |D\eta_j|\leq 2$. By the polynomial volume growth of $\Sigma$, we get that as $j\To\infty$,
\begin{equation}
\int_\Sigma|\nabla_\Sigma \eta_j|^2\text{e}^{-\frac{|\overrightarrow{x}|^2}{4}}\leq \int_{\Sigma\cap (B_{R_j+1}\setminus B_{R_j})}2\text{e}^{-\frac{|\overrightarrow{x}|^2}{4}}
\To 0.
\end{equation}
Hence, by the weighted stability inequality for $\Sigma$ and monotone convergence theorem, we conclude that 
\begin{equation}
\int_\Sigma|A|^2\text{e}^{-\frac{|\overrightarrow{x}|^2}{4}}=0.
\end{equation}
Therefore, $|A|=0$ and $u=a_1x_1+\cdots+a_nx_n$ for some constants $a_1,\dots,a_n\in\Real$.\\

\textbf{Acknowledgement} The author would like to thank her advisor Prof. Tobias Colding for continuous encouragement and many useful suggestions for the revision of the manuscript of this note.
\bibliographystyle{plain}
\bibliography{BSelfShrinker}

\end{document}